\DeclareMathOperator{\atan2}{atan2}
\DeclareMathOperator{\diag}{diag}
\DeclareMathOperator{\re}{Re}
\DeclareMathOperator{\im}{Im}
\newtheorem{theorem}{Theorem}
\newtheorem{proposition}{Proposition}
\newtheorem{assumption}{Assumption}
\theoremstyle{remark} \newtheorem{rmk}{Remark}
\newcommand{\ud}{\,\mathrm{d}}
\begin{document}
%
\title{A New Voltage Stability-Constrained Optimal Power Flow Model: Sufficient Condition, SOCP Representation, and Relaxation}
%
%
%

\author{Bai~Cui,~\IEEEmembership{Student Member,~IEEE}
        and~Xu~Andy~Sun,~\IEEEmembership{Senior Member,~IEEE}
\thanks{B. Cui is with the School of Electrical and Computer Engineering, Georgia Institute of Technology, Atlanta, GA 30332 USA {\tt\small bcui7@gatech.edu}.}
\thanks{X. A. Sun is with the Department of Industrial and Systems Engineering, Georgia Institute of Technology, Atlanta, GA 30332 USA {\tt\small andy.sun@isye.gatech.edu}.}}

\maketitle

\begin{abstract}
This paper proposes a novel voltage stability-constrained optimal power flow (VSC-OPF) model utilizing a recently proposed sufficient condition on power flow Jacobian nonsingularity.
We show that this condition is second-order conic representable when load powers are fixed. Through the incorporation of the convex sufficient condition {and thanks to the recent development of convex relaxation of OPF models}, we {cast} a VSC-OPF formulation as a second-order cone program (SOCP). An approximate model is introduced to improve the scalability of the formulation to larger systems. Extensive computation results on \textsc{Matpower} and NESTA instances confirm the effectiveness and efficiency of the formulation.
\end{abstract}

\begin{IEEEkeywords}
voltage stability,  second order cone programming, {voltage stability-constrained optimal power flow}, power flow feasibility.
\end{IEEEkeywords}

%
\IEEEpeerreviewmaketitle

\section{Introduction} \label{sect1}


\IEEEPARstart{T}{he} {need to ensure steady-state voltage stability and maintain sufficient loading margin in optimal power flow (OPF) models has led to the development of voltage stability-constrained OPF (VSC-OPF) models, which solves OPF problems while accounting for voltage stability limits at the same time.} Traditionally, to avoid system instability, security constraints such as voltage magnitude limits and line flow limits are enforced in normal {OPF models}. However, the effectiveness of these security constraints {alone} in safeguarding system stability {may be insufficient} in modern power systems {with adequate reactive power support, which is demonstrated by a two-bus example in \cite{Todescato16}}. Another motivation for the inclusion of steady-state stability limit in an OPF formulation is the increasing trend to operate power systems ever closer to their operational limits due to increased demand and competitive electricity market. Without stability constraints, the robustness of the OPF solution against voltage instability is not ensured. 

{In \cite{Canizares01}, two sets of power flow equations representing system at base case and critical condition are present in an OPF model. The power injections of the two sets of equations are related by a loading factor, which is used to represent the loading margin to voltage instability to be optimized. The model is extended to a multi-objective one in \cite{Milano03} in which voltage stability and social welfare are simultaneously taken care of. An extension to incorporate $N-1$ contingencies in this VSC-OPF model has been reported in \cite{Milano05} where a heuristic contingency ranking technique is applied for the sake of computation tractability. An alternative method to account for contingencies in a VSC-OPF model based on iterative CPF-OPF computation is presented in \cite{Milano05b}. However, the loading margin is only quantified along one direction of power variation in these models. Dynamic voltage stability has been considered in security-constrained OPF such that systems subject to contingencies will settle down to stable operating points. Dynamic simulation with scenario filtering techniques have been employed to this end in \cite{Capitanescu09, Capitanescu11}. These methods are highly dependent on the selection of contingencies and suffer from scalability issue. A different strategy to represent proximity to voltage instability is through the use of minimum singular value (MSV) of the power flow Jacobian, which can be used as a stability constraint in an VSC-OPF model. The main drawbacks of the method are that 1) the value of MSV can hardly be interpreted in terms of loading margin; 2) MSV is not an explicit function of the optimization variables. Linearization and iterative algorithms have been proposed trying to address the second issue \cite{Kodsi07, Avalos08}. However, the computational cost is prohibitively high for large-scale systems.} 

{To circumvent the weaknesses of the aforementioned VSC-OPF models and achieve a better trade-off between robustness and computational tractability, several heuristic voltage stability indicators have been proposed to be embedded in VSC-OPF formulations. For instance, the $L$-index originally proposed in \cite{PK86} has been used as an indicator for voltage stability improvement in \cite{Kumar14}. Leveraging semidefinite programming relaxation of OPF,  this problem can be formulated as a semidefinite program with quasi-convex objective \cite{Pederson15}. Polyhedron approximation of security boundaries has been applied in a DC-OPF model in \cite{Canizares16} for proper accounting of system loading margins. However, the characterization of security boundary gets complicated as the dimension of feasible region goes up. 
In \cite{Bolognani16}, a sufficient condition for existence and uniqueness of high-voltage solution in distribution system is obtained using fixed-point argument, which has been extended in \cite{Yu15}. Similar techniques have subsequently been applied to yield stronger results in \cite{Simpson16} and \cite{WangC16}. The sufficient voltage stability condition in \cite{Simpson16} has been used for voltage stability improvement and `voltage stress minimization' for reactive power flow equations in \cite{Todescato16}. A voltage stability index based on branch flow is integrated in VSC-OPF formulation in \cite{Kamwa14}. Major concerns of these indices are their conservativeness and computational properties. Hence, the main motivation of this paper is to apply a novel and tight voltage stability index in the VSC-OPF model which enjoys nice computational properties under very mild approximation.}

{We first introduce a sufficient condition for power flow Jacobian nonsingularity that we proposed recently in \cite{Wang17}}. We then formulate a VSC-OPF problem in which the voltage stability margin is quantified by the condition. We show that when load powers are fixed, this voltage stability condition describes a second-order conic representable  set in a transformed voltage space. Thus second-order cone program (SOCP) reformulation can naturally incorporate the condition. Notice that the formulation does not require the DC or decoupled power flow assumptions. To improve computation time, we sparsify the dense stability constraints while preserving very high accuracy.

The rest of the paper is organized as follows. Section \ref{sect2} provides background on power system modeling. The sufficient condition for power flow Jacobian nonsingularity is introduced in Section \ref{sect:condition}. We discuss the VSC-OPF formulation, its convex reformulation, and sparse approximation in Section \ref{sect3}. Section \ref{sect4} presents results of extensive computational experiments {and comparative studies. Section \ref{sect6} concludes.}

\section{Background} \label{sect2}

\subsection{Notations}
The cardinality of a set or the absolute value of a (possibly) complex number is denoted by $|\cdot|$. $\mathrm{i} = \sqrt{-1}$ is the imaginary unit. $\mathbb{R}$ and $\mathbb{C}$ are the set of real and complex numbers, respectively. For vector $x \in \mathbb{C}^n$, $\| x\|_p$ denotes the $p$-norm of $x$ where $p \geq 1$ and $\diag(x) \in \mathbb{C}^{n \times n}$ is the associated diagonal matrix. The $n$-dimensional identity matrix is denoted by $\mathbf{I}_n$. $\mathbf{0}_{n \times m}$ denotes an $n\times m$ matrix of all 0's.  For $A \in \mathbb{C}^{n\times n}$, $A^{-1}$ is the inverse of $A$. For $B \in \mathbb{C}^{m\times n}$, $B^T$, $B^H$ are respectively the transpose and conjugate transpose of $B$, and $B^*$ is the matrix with complex conjugate entries. The real and imaginary parts of $B$ are denoted as $\re B$ and $\im B$. {$b_i$ denotes the vector formed by the $i$th row of $B$.}

\subsection{Power system modeling} \label{pfmodel}
We consider a connected single-phase power system with $n+m$ buses operating in steady-state. The underlying topology of the system can be described by an undirected connected graph $G = (\mathcal{N}, \mathcal{E})$, where $\mathcal{N} = \mathcal{N}_G \cup \mathcal{N}_L$ is the set of buses equipped with ($\mathcal{N}_G$) and without ($\mathcal{N}_L$) generators (or generator buses and load buses), and that $|\mathcal{N}_G| = m$ and $|\mathcal{N}_L| = n$. We number the buses such that the set of load buses are $\mathcal{N}_L = \{ 1, \ldots, n \}$ and the set of generator buses are $\mathcal{N}_G = \{ n+1, \ldots, n+m \}$. Generally, for a complex matrix $A \in \mathbb{C}^{(n+m)\times k}$, define $A_L = (A_{ij})_{i \in \mathcal{N}_L}$. That is, $A_L$ is the first $n$ rows of the matrix $A$. Similarly, define $A_G = (A_{ij})_{i \in \mathcal{N}_G}$. Every bus $i$ in the system is associated with a voltage phasor $V_i = |V_i| e^{\mathrm{i}\theta_i}$ where $|V_i|$ and $\theta_i$ are the magnitude and phase angle of the voltage. We will find it convenient to adopt rectangular coordinates for voltages sometimes, so we also define $V_i = e_i + \mathrm{i}f_i$. The generator buses are modeled as PV buses, while load buses are modeled as PQ buses. For bus $i$, the injected power is given as $S_i = P_i + \mathrm{i}Q_i$. 

The line section between buses $i$ and $j$ in the system is weighted by its complex admittance $y_{ij} = 1/z_{ij} = g_{ij} + \mathrm{i}b_{ij}$. The nodal admittance matrix $Y = G + \mathrm{i}B \in \mathbb{C}^{(n+m)\times(n+m)}$ has components $Y_{ij} = -y_{ij}$ and $Y_{ii} = y_{ii} + \sum_{j=1}^{n+m} y_{ij}$ where $y_{ii}$ is the shunt admittance at bus $i$.

The nodal admittance matrix relates system voltages and currents as
\begin{equation}
	\begin{bmatrix}
		I_L \\
		I_G
	\end{bmatrix} = 
	\begin{bmatrix}
		Y_{LL} & Y_{LG} \\
		Y_{GL} & Y_{GG}
	\end{bmatrix}
	\begin{bmatrix}
		V_L \\
		V_G
	\end{bmatrix}.
	\label{admit}
\end{equation}
We obtain from (\ref{admit}) that
\begin{equation}
	V_L = -Y_{LL}^{-1}Y_{LG}V_G + Y_{LL}^{-1}I_L.
	\label{equi1}
\end{equation}
Define the vector of equivalent voltage to be $E = -Y_{LL}^{-1} Y_{LG} V_G$ and the impedance matrix to be $Z = Y_{LL}^{-1}$ (we assume the invertibility of $Y_{LL}$ and note that this is almost always the case for practical systems). With the definitions, ({\ref{equi1}}) can be rewritten as 
\begin{equation} \label{eq:nodekvl}
{V_L = E + ZI_L.}
\end{equation}

For practical power systems, the generator buses have regulated voltage magnitudes and small phase angles. It is common in voltage stability analysis to assume that the generator buses have constant voltage phasor $V_G$ \cite{Wang17, PK86}. The assumption can be partially justified by the fact that voltage instability are mostly caused by system overloading due to excess demand at load side, irrelevant of generator voltage variations.
\begin{assumption} \label{as1}
	The vector of generator bus voltages $V_G$ is constant.
\end{assumption}

Note that Assumption \ref{as1} is always satisfied for uni-directional distribution systems where the only source is modeled as a slack bus with fixed voltage phasor. {The voltage stability constraint in the paper is based on our recent result on the nonsingularity of power flow Jacobian \cite{Wang17}. The derivation of the result takes advantage of the special characteristics systems with constant generator voltage vector $E$. With Assumption 1, $E$ is fixed and the result in \cite{Wang17} can be applied.}

The power flow equations in the rectangular form relate voltages and power injections at each bus $i \in \mathcal{N}$ via
\begin{subequations} \label{eq:recpf}
	\begin{align}
		P_i &= \sum_{j=1}^{n+m} [G_{ij}(e_ie_j + f_if_j) + B_{ij}(e_jf_i - e_if_j)], \label{eq:recpf:real}\\
		Q_i &= \sum_{j=1}^{n+m} [G_{ij}(e_jf_i - e_if_j) - B_{ij}(e_ie_j + f_if_j)].\label{eq:recpf:react}
	\end{align}
\end{subequations}

\begin{rmk}
{The power flow Jacobian with Assumption 1 is given by
\begin{equation} \label{eq:pfjac:red}
J_{LL} := \begin{bmatrix}
\frac{\partial P_L}{\partial e_L} & \frac{\partial P_L}{\partial f_L} \\
\frac{\partial Q_L}{\partial e_L} & \frac{\partial Q_L}{\partial f_L}
\end{bmatrix}.
\end{equation}
Note that $J_{LL}$ is in fact a submatrix of the full Jacobian considering generator real power equations:
\begin{equation}
J = \begin{bmatrix}
J_{GG} & J_{GL} \\
J_{LG} & J_{LL}
\end{bmatrix}.
\end{equation}}

{As we know, voltage stability studies are primarily concerned with the singularity of power flow Jacobian $J$. Of course, for generic matrix $J$, the singularity of its principal submatrices are not necessarily related to that of the full matrix. Then the assumption of constant generator voltage phasors seems to be questionable since the stability analysis based on a submatrix may not be relevant. However, we note this is not the case in voltage stability analysis. First of all, the validity of using power flow Jacobian as a voltage stability indicator is based on the assumption that $\det J_{LL} \neq 0$. In this case the system stability is determined by the reduced Jacobian $J_{\mathrm{red}} = J_{GG} - J_{GL}J_{LL}^{-1}J_{LG}$, whose determinant is singular if and only if the determinant of the power flow Jacobian
\begin{equation}
\det J = \det J_{LL} \det (J_{GG} - J_{GL}J_{LL}^{-1}J_{LG})
\end{equation} 
is singular \cite[Chap. 5]{Cutsem08}. However, the singularity of $J_{LL}$ is itself one of the  mechanisms of voltage collapse, which is called singularity-induced bifurcation and has been demonstrated through a rudimentary dynamic power system model in \cite{Venkat92}. Second, the singularity of $J$ is often associated with the ill-conditioning of the matrix $J_{LL}$; and the MSV of $J_{LL}$ tends to decrease monotonically with increased loading levels, which are demonstrated by IEEE 9-bus system in Fig. \ref{fig:sigma9bus}. Therefore we believe the study of $J_{LL}$ for voltage stability purposes can be justified from both physical and numerical perspective.}

\begin{figure}[!t]
	\centering
	\includegraphics[width=2.8in]{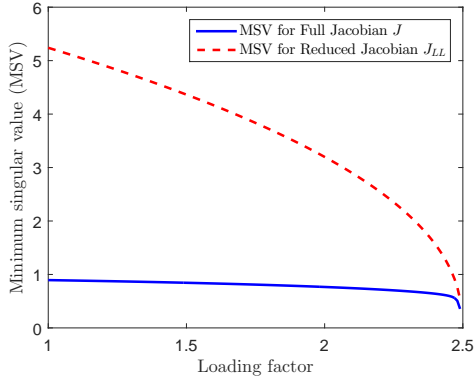}
	\caption{MSVs of full and reduced power flow Jacobian with respect to system loading for 9-bus system.}
	\label{fig:sigma9bus}
\end{figure}

\end{rmk}

\begin{rmk}
{After the overexcitation limiter of a generator takes effect, the terminal voltage of the generator can no longer be regulated, and a common modeling practice is to switch the bus type from PV to PQ. We note that the generator can also be modeled as a constant excitation emf behind synchronous impedance based on \cite[Sect. 3.4.2]{Cutsem08}, the validity of which has been justified in \cite{Wang13}. The synchronous impedance can be absorbed by the system admittance matrix and the model reduces to the one with constant voltage sources and constant power load buses. This can be done iteratively every time generators reaches their reactive power limits after OPF computation.}
\end{rmk}

\subsection{AC-OPF formulation}

Using the power flow equations \eqref{eq:recpf:real}-\eqref{eq:recpf:react}, a standard AC-OPF model can be written as 
\begin{subequations}\label{eq:ACOPF}
	\begin{align}
		\mathrm{min} \quad
		& \sum_{i \in \mathcal{N}_G} f_i(P_{G_i}) \label{eq:ACOPF:obj}\\
		\mathrm{s.t.} \quad
		& P_i(e, f) = P_{G_i} - P_{D_i}, &&  i \in \mathcal{N} \label{eq:ACOPF:realpf} \\
		& Q_i(e, f) = Q_{G_i} - Q_{D_i}, &&  i \in \mathcal{N}  \label{eq:ACOPF:reactpf}\\
		& \underline{P}_{G_i} \leq P_{G_i} \leq \overline{P}_{G_i}, &&  i \in \mathcal{N}_G \label{eq:ACOPF:realg} \\
		& \underline{Q}_{G_i} \leq Q_{G_i} \leq \overline{Q}_{G_i}, &&  i \in \mathcal{N}_G \label{eq:ACOPF:reactg} \\
		& \underline{V}_i^2 \leq e_i^2 + f_i^2 \leq \overline{V}_i^2, &&  i \in \mathcal{N} \label{eq:ACOPF:vol} \\
		& |P_{ij}(e, f)| \leq \overline{P}_{ij}, &&  (i,j) \in \mathcal{E} \label{eq:ACOPF:pbr} \\
		& |I_{ij}(e, f)| \leq \overline{I}_{ij}, &&  (i,j) \in \mathcal{E}, \label{eq:ACOPF:curr}
	\end{align}
\end{subequations}
where $f_i(P_{G_i})$ in \eqref{eq:ACOPF:obj} is the variable production cost of generator $i$, assuming to be a convex quadratic function; $P_{G_i}$ and $P_{D_i}$ in \eqref{eq:ACOPF:realpf}-\eqref{eq:ACOPF:reactpf} are the real power generation and load at bus $i$, respectively; $Q_{G_i}$ and $Q_{D_i}$ are the reactive power generation and load at bus $i$; $P_i(e,f)$ and $Q_i(e,f)$ are given by the power flow equations \eqref{eq:recpf}; 
constraints \eqref{eq:ACOPF:realg}-\eqref{eq:ACOPF:reactg} represent the real and reactive power generation capability of generator $i$.
$P_{ij}$ and $I_{ij}$ in \eqref{eq:ACOPF:pbr}-\eqref{eq:ACOPF:curr} are the real power and current magnitude flowing from bus $i$ to $j$ for line $(i,j) \in \mathcal{E}$, respectively.

\section{A Sufficient Condition for Nonsingularity of Power flow Jacobian} \label{sect:condition}
A sufficient condition for the nonsingularity of power flow Jacobian is recently proposed in \cite{Wang17} as stated in the following theorem. We will use this result {to derive a voltage stability index which is to be embedded in an OPF model to form a VSC-OPF formulation.}
\begin{theorem}\label{thm:condition}
	The power flow Jacobian of (\ref{eq:recpf}) is nonsingular if
	\begin{equation} \label{condition}
		|V_i| - \|z_i^T \diag(I_L) \|_1 > 0 , \quad i \in \mathcal{N}_L.
	\end{equation}
\end{theorem}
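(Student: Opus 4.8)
The plan is to prove that $J_{LL}$ in \eqref{eq:pfjac:red} is nonsingular by recasting its action in complex form and then running a contraction/diagonal-dominance argument in a voltage-weighted maximum norm. First I would record a preliminary remark: condition \eqref{condition} already forces $|V_i|>0$ for every $i\in\mathcal{N}_L$, so $\diag(V_L)$ is invertible and the scalings used below are well defined. Next I would use $I_L=Y_{LL}V_L+Y_{LG}V_G$ together with Assumption~\ref{as1} --- which makes $Y_{LG}V_G$ a fixed vector, so $I_L$ depends only on $V_L$ --- to write the load-bus equations \eqref{eq:recpf} as $\overline S_L=\diag(\overline V_L)(Y_{LL}V_L+Y_{LG}V_G)$ and differentiate, treating $V_L$ and $\overline V_L$ as independent, obtaining the compact identity
\begin{equation*}
\mathrm{d}\overline S_L \;=\; \diag(\overline V_L)\,Y_{LL}\,\mathrm{d} V_L \;+\; \diag(I_L)\,\mathrm{d}\overline V_L .
\end{equation*}
Since $(e_L,f_L)\mapsto V_L$, $(P_L,Q_L)\mapsto S_L$, and $S_L\mapsto\overline S_L$ are fixed invertible real-linear maps, $J_{LL}$ is nonsingular if and only if the real-linear operator $\mathrm{d} V_L\mapsto\mathrm{d}\overline S_L$ above, with $\mathrm{d}\overline V_L=\overline{\mathrm{d} V_L}$, is injective.

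I would then argue by contradiction. Suppose $J_{LL}$ is singular, and choose $\mathrm{d} V_L\neq0$ with $\mathrm{d}\overline S_L=0$. Because $\diag(\overline V_L)$ and $Y_{LL}=Z^{-1}$ are invertible, left-multiplying the identity by $Z\diag(\overline V_L)^{-1}$ gives $\mathrm{d} V_L=-Z\diag(I_L/\overline V_L)\overline{\mathrm{d} V_L}$. Substituting $\psi:=\diag(\overline V_L)^{-1}\mathrm{d} V_L$, so that $\overline{\mathrm{d} V_L}=\diag(V_L)\overline\psi$, and multiplying through by $\diag(\overline V_L)^{-1}$ turns this into $\psi=-N\overline\psi$ with $N:=\diag(\overline V_L)^{-1}Z\diag(I_L V_L/\overline V_L)$. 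The $i$-th row of $N$ is $\overline V_i^{-1}z_i^T\diag(I_L V_L/\overline V_L)$, and since each diagonal factor $I_{L_j}V_j/\overline V_j$ has modulus $|I_{L_j}|$, that row has $\ell_1$-norm $|V_i|^{-1}\|z_i^T\diag(I_L)\|_1$, which by \eqref{condition} is strictly below $1$ for every $i\in\mathcal{N}_L$; hence $\|N\|_\infty<1$. But then $\psi=-N\overline\psi$ forces $\|\psi\|_\infty=\|N\overline\psi\|_\infty\le\|N\|_\infty\|\psi\|_\infty$, which is impossible unless $\psi=0$, i.e.\ $\mathrm{d} V_L=\diag(\overline V_L)\psi=0$ --- a contradiction. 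This would establish that $J_{LL}$ is nonsingular.

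The step I expect to be the main obstacle is twofold. First, one must justify carefully that nonsingularity of the $2n\times 2n$ real Jacobian is equivalent to injectivity of the linear-plus-conjugate-linear operator on $\mathbb{C}^n$ obtained from the (Wirtinger-type) differential of $\overline S_L$ --- that is, that the change of real coordinates does not affect singularity. Second, and quantitatively more delicate, one must find the scaling that places the magnitude at the \emph{row} index, $|V_i|$, in the denominator of \eqref{condition}: a direct $\ell_\infty$ bound on the unscaled operator $\mathrm{d} V_L\mapsto\mathrm{d} V_L+Z\diag(I_L/\overline V_L)\overline{\mathrm{d} V_L}$ only yields the weaker row conditions $\|z_i^T\diag(I_L/\overline V_L)\|_1<1$, in which the voltage magnitudes sit under the \emph{column} index. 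The remedy, used above, is the change of variables $\psi=\diag(\overline V_L)^{-1}\mathrm{d} V_L$ --- equivalently a diagonal similarity of the conjugate-linear part by $\diag(\overline V_L)$ --- which moves one voltage factor from each column onto the corresponding row and collapses the $i$-th row sum to exactly $\|z_i^T\diag(I_L)\|_1/|V_i|$. Everything after that is the routine maximum-norm estimate. Note also that Assumption~\ref{as1} is precisely what makes $\overline S_L$ affine in $V_L$, so that the differential has the clean closed form used throughout.
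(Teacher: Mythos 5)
Your argument is correct and takes essentially the same route as the paper, which only sketches the proof and defers details to \cite{Wang17}: your change of variables $\psi=\diag(\overline V_L)^{-1}\mathrm{d}V_L$ is precisely the diagonal similarity transformation the paper invokes, and your $\ell_\infty$ bound $\|N\|_\infty<1$ with the resulting kernel contradiction is the strict-diagonal-dominance (Levy--Desplanques) argument carried out by hand, yielding exactly the row sums $\|z_i^T\diag(I_L)\|_1/|V_i|<1$. So the proposal matches the paper's stated strategy of similarity transformation plus strict diagonal dominance of the transformed Jacobian, merely rephrased as a contraction estimate in complex (Wirtinger) coordinates.
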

{The proof is based on similarity transformation of the power flow Jacobian. We have shown that the transformed matrix is strictly diagonally dominant as long as \eqref{condition} holds. Since strictly diagonally dominant matrices are nonsingular and similarity transformation preserves eigenvalues, the power flow Jacobian is nonsingular when \eqref{condition} holds. The proof takes advantage of the special structure of the matrix $J_{LL}$. Under Assumption 1, the power flow Jacobian and $J_{LL}$ coincide. For proof of the theorem, see \cite{Wang17}.}

{The term $\|z_i^T \diag(I_L) \|_1$ in Theorem 1 can be thought of as the generalized voltage drop between the equivalent source with voltage $E_i$ to the load. Then the theorem states that the system is voltage stable if the generalized voltage drop is less than the corresponding load voltage magnitude for all load buses. It has been shown in \cite{Wang17} that the result is strong, meaning that the violation of the condition is often immediately followed by the loss of voltage stability.}

{It is suggested in \cite{PK86} that the following condition is satisfied at the point of voltage instability under certain simplifying assumptions (proportional load current variations, etc.)
\begin{equation}
	\left| \sum_{i=1}^n Z_{ji}I_i \right| = |V_j|. \label{eq:condPK}
\end{equation}
It is seen from \eqref{eq:nodekvl} that the left hand side is the voltage drop between the equivalent source with voltage $E_j$ and the load. The result implies that under certain assumptions, the voltage stability of a multi-bus system resembles that of a two-bus system where the voltage stability boundary is achieved when the magnitude of voltage drop and load voltage are identical. Due to various assumptions, the condition works relatively well under proportional load variations, but becomes less effective as load variation deviates from the assumed proportional pattern.}

{We note the similarity between the condition \eqref{eq:condPK} and \eqref{condition} used in the paper. The condition \eqref{condition} is weaker in the sense that the generalized voltage drop $\|z_i^T \diag(I_L) \|_1$ is larger than the actual voltage drop in \eqref{eq:condPK}, but it nevertheless generalizes the latter condition and does not require the proportional current injection assumption. For a more thorough comparison of the two conditions, see \cite{Wang17}.}

\section{A New Model for VSC-OPF} \label{sect3}

The standard AC-OPF formulation embeds system security constraints as line real power and current limits in (\ref{eq:ACOPF:pbr}) and (\ref{eq:ACOPF:curr}). However, the parameters in these security-related constraints, such as $\overline{P}_{ij}$ and $\overline{I}_{ij}$, are calculated off-line using possible dispatch scenarios that do not necessarily represent the actual system conditions \cite{Milano03}. This motivates the formulation of VSC-OPF models. In this section, we propose a new model for VSC-OPF using the voltage condition derived in \eqref{condition} and show that it has nice convex properties amenable for efficient computation.

\subsection{New formulation}
We propose the following new VSC-OPF model,
\begin{subequations}
	\label{eq:newVSCOPF}
	\begin{align}
		\mathrm{min} \quad
		& \sum_{i \in \mathcal{N}_G} f_i(P_{G_i}) \\
		\mathrm{s.t.} \quad
		& \text{\eqref{eq:ACOPF:realpf} -- \eqref{eq:ACOPF:vol}} \nonumber\\
		& |V_i| - \sum_{j = 1}^n \frac{A_{ij}}{|V_j|} \geq \underline{t}_{i}, \quad i \in \mathcal{N}_L. \label{eq:newVSCOPF:cond}
	\end{align} 
\end{subequations}
where $A_{ij} := |Z_{ij}S_j|$.
The key constraint is \eqref{eq:newVSCOPF:cond}, which reformulates the left-hand side of \eqref{condition} by writing line currents as the ratio of apparent powers that satisfy the power flow equations \eqref{eq:recpf} and voltages, and $\underline{t}_i$ is a preset positive parameter to control the level of voltage stability. We note that line flow constraints are not included in the VSC-OPF formulation \eqref{eq:newVSCOPF}. We have deliberately chosen not to include them since 1) we would like to demonstrate the capability of the proposed voltage stability constraint in restraining system margins to voltage instability, and 2) we believe the proposed constraint is better suited for stability constraining purposes. To guarantee the same level of voltage stability, line flow constraints come at a price of higher level of conservativeness compared to the proposed stability constraint since the line flow constraints are not intrinsic voltage stability measures. It then follows that to ensure similar level of voltage stability, the inclusion of line flow constraints shrinks the feasibility region of the problem. Of course, there are no technical difficulties in the inclusion of line flow constraints in our formulation and we agree that for lines with low thermal ratings or low line flow margins, the inclusion of corresponding constraints are necessary and beneficial. To ensure that \eqref{eq:newVSCOPF} is a proper formulation with good computational property, we first show that the set of voltages satisfying condition \eqref{eq:newVSCOPF:cond} is voltage stable, and then we show that \eqref{eq:newVSCOPF:cond} is second-order cone (SOC) representable, thus convex, when $S_L$ is constant. The condition of constant $S_L$ is always met in OPF problems.

\subsubsection{Connectedness}
A necessary condition for voltage instability is the singularity of power flow Jacobian {\cite[Sect. 7.1.2]{Cutsem08}}. Assume that the zero injection solution of power flow equations \eqref{eq:recpf} is voltage stable with a nonsingular Jacobian (which always holds for any physically meaningful system). We know from \eqref{eq:recpf} that every entry of $J$ is a continuous function of voltages, so the eigenvalues of $J$ are also continuous in voltages. Since a continuous function maps a connected set to another connected set, if a given connected set of power flow solutions contains the zero injection solution (which is voltage stable) and the corresponding power flow Jacobian of every point in the set is nonsingular, then the set characterizes a subset of voltage stable solutions. Define the set $\mathcal{S}_0 := \{V_L | \text{ $\eqref{condition}$ holds}\}$ and $\mathcal{S}_0 \supseteq \mathcal{S}_{\underline{t}} := \{V_L | \text{ $\eqref{eq:newVSCOPF:cond}$ holds}\}$. We know from Theorem \ref{thm:condition} that the power flow Jacobian is nonsingular for $V_L \in \mathcal{S}_0$, we also know the zero injection solution is in $\mathcal{S}_0$. Therefore, in order to show the set $\mathcal{S}_{\underline{t}}$ is voltage stable, we show the more general case that $\mathcal{S}_0$ is voltage stable, which amounts to showing the connectedness of $\mathcal{S}_0$. We give the proof of this property below.

\begin{theorem}
	The set $\mathcal{S}_0$ is connected.
\end{theorem}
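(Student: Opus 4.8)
The plan is to prove the stronger statement that $\mathcal{S}_0$ is \emph{star-shaped with respect to the zero-injection solution $V_L = E$}; since every star-shaped set is path-connected, connectedness follows immediately. The first step is to re-express the defining inequalities of $\mathcal{S}_0$ as conditions on $V_L$ alone. Under Assumption~\ref{as1} the generator voltages are fixed, so \eqref{admit} gives $I_L = Y_{LL}V_L + Y_{LG}V_G = Y_{LL}(V_L - E)$ after substituting $E = -Y_{LL}^{-1}Y_{LG}V_G$. Writing $Z = Y_{LL}^{-1}$ and recalling that $z_i^T$ is the $i$-th row of $Z$, the $i$-th inequality in \eqref{condition} becomes $|V_i| > \sum_{j=1}^n |Z_{ij}|\,\bigl|\bigl(Y_{LL}(V_L - E)\bigr)_j\bigr|$. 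At $V_L = E$ the right-hand side vanishes, so $E \in \mathcal{S}_0$ provided $E_i \neq 0$ for every $i \in \mathcal{N}_L$ --- precisely the hypothesis that the zero-injection solution is a physically meaningful operating point; if it fails, the argument below shows the $i$-th constraint is infeasible and $\mathcal{S}_0 = \varnothing$, which is trivially connected.

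Next I would fix an arbitrary $V_L \in \mathcal{S}_0$, set $W := V_L - E$ and $I_L := Y_{LL}W$, and walk along the segment $V_L(\lambda) := E + \lambda W$ for $\lambda \in [0,1]$. Along this segment the load current scales linearly, $I_L(\lambda) = \lambda I_L$, so the right-hand side of the $i$-th inequality scales to $\lambda r_i$ with $r_i := \sum_{j=1}^n |Z_{ij}|\,|(I_L)_j| = \|z_i^T\diag(I_L)\|_1$. Hence $V_L(\lambda)$ satisfies constraint $i$ if and only if the real quadratic $h_i(\lambda) := |E_i + \lambda W_i|^2 - \lambda^2 r_i^2 = \bigl(|W_i|^2 - r_i^2\bigr)\lambda^2 + 2\lambda\,\re(\overline{E_i}W_i) + |E_i|^2$ is positive. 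The crux is the structural identity $W = Z(Y_{LL}W) = Z I_L$, which gives $W_i = \sum_j Z_{ij}(I_L)_j$ and therefore $|W_i| \le \sum_j |Z_{ij}|\,|(I_L)_j| = r_i$ by the triangle inequality. Consequently the leading coefficient $|W_i|^2 - r_i^2$ is nonpositive, so $h_i$ is concave on $[0,1]$; since $h_i(0) = |E_i|^2 > 0$ and $h_i(1) = |V_i|^2 - r_i^2 > 0$ (the latter because $V_L \in \mathcal{S}_0$), concavity yields $h_i(\lambda) \ge (1-\lambda)h_i(0) + \lambda h_i(1) > 0$ for all $\lambda \in [0,1]$. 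This holds for every $i \in \mathcal{N}_L$, so $V_L(\lambda) \in \mathcal{S}_0$ throughout, proving star-shapedness with center $E$, and hence connectedness.

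The only genuinely nontrivial point --- the step I expect to be the main obstacle --- is choosing the right center ($E$, the zero-injection point) and recognizing that $Z = Y_{LL}^{-1}$ forces $|W_i| \le r_i$: this is exactly what makes $h_i$ concave and lets the two endpoint values propagate to the entire segment. Without it, the natural temptation is to try to prove convexity of $\mathcal{S}_0$ directly, but $\mathcal{S}_0$ is genuinely nonconvex, and a naive segment argument stalls on bounding $|E_i + \lambda W_i|$ from below. Once the concavity is established, the rest is the elementary observation that a concave function positive at both endpoints of an interval is positive on the whole interval. Combined with Theorem~\ref{thm:condition} and the voltage stability of the zero-injection solution, this connectedness is exactly what is needed to conclude that every $V_L \in \mathcal{S}_0$ (and a fortiori every $V_L \in \mathcal{S}_{\underline{t}}$) is voltage stable.
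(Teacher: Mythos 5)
Your proof is correct, and it shares the paper's overall skeleton --- both arguments establish that $\mathcal{S}_0$ is star-shaped with respect to the zero-injection point $v_0 = E$ by walking the segment $V_L(\lambda) = E + \lambda(V_L - E)$ and exploiting the fact that $I_L(\lambda)$ scales linearly --- but the way you certify that the segment stays inside $\mathcal{S}_0$ is genuinely different. The paper works with the unsquared quantities and calculus: it bounds the derivative of $\sum_j |Z_{ij}I_j(t)|$ from below by $\sqrt{a^2+b^2}$ (triangle inequality) and the derivative of $|V_i(t)|$ from above by the same quantity (Cauchy--Schwarz), then uses the fundamental theorem of calculus and a contradiction at the endpoint $t=1$ to rule out a violation at any interior $t_1$. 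You instead square the inequality, observe that $h_i(\lambda) = |E_i + \lambda W_i|^2 - \lambda^2 r_i^2$ is a quadratic whose leading coefficient $|W_i|^2 - r_i^2$ is nonpositive precisely because $W = Z I_L$ gives $|W_i| \le r_i$ (the same triangle-inequality fact, in disguise), and conclude by concavity from positivity at the two endpoints. Your route is more elementary and slightly more robust: it sidesteps the differentiability of $|V_i(t)|$ (the paper's derivative formula degenerates if $|V_i(t)|$ passes through zero), it makes explicit the requirement $E_i \neq 0$ that the paper leaves implicit when asserting $v_0 \in \mathcal{S}_0$, and it disposes of the degenerate case $E_i = 0$ by showing the $i$-th constraint is then infeasible. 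What the paper's derivative-comparison argument buys in exchange is a more physical reading --- the generalized voltage drop grows at least as fast as the load voltage magnitude along any ray from $E$ --- but as a proof of connectedness your concavity argument is complete and, if anything, cleaner.
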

\begin{proof}
	To show the set is connected, we fix a point in the set and show that for any other point in the set, the line segment between the two points lies in the set. 
		
	When load currents are all zero, it follows from \eqref{eq:nodekvl} that the nodal load voltages are simply $E$. We denote the zero injection voltage solution by $v_0$, that is, $v_0 := E$. Then $v_0 \in \mathcal{S}_0$ follows immediately since $\Vert z_i^T \diag(I_L) \Vert_1 = 0$ for all $i\in \mathcal{N}_L$. Take $v_1 \in \mathcal{S}_0$ and define $V_L$ parametrized by $t \in \left[0,1\right]$ as $V_L(t) = v_0 + \left(v_1 - v_0\right)t$. We will show $V_L(t)$ is in $\mathcal{S}_0$. It is clear that current injections are linear functions of $t$, since we know from \eqref{eq:nodekvl} that
	\begin{subequations}
		\begin{align}
			I_L(t) &= Y_{LL} \left(V_L(t) - E\right) \label{eq:ilparametrized:a} \\
			&= Y_{LL} \left(v_0 + \left(v_1-v_0\right)t - v_0 \right) \label{eq:ilparametrized:b} \\
			&= Y_{LL} \left(v_1-v_0\right)t. \label{eq:ilparametrized:c}
		\end{align}
	\end{subequations}
	We claim that for any $t \in [0,1]$ the derivative of $\sum_{j=1}^n \left|Z_{ij}I_j\right|$ is larger than or equal to the magnitude of that of $|V_i|$ for all $i \in \mathcal{N}_L$. Since current injections are linear in $t$, let $Z_{ij}I_j$ be denoted by $a_{ij}t + \mathrm{i}b_{ij}t$ for real numbers $a_{ij}$ and $b_{ij}$ for all $(i,j) \in \mathcal{N}_L \times \mathcal{N}_L$, and denote $a := \sum_{j=1}^n a_{ij}$ and $b := \sum_{j=1}^n b_{ij}$ for brevity, then for each $i \in \mathcal{N}_L$ we have
	\begin{subequations} \label{eq:derivative:zi}
		\begin{align}
			\frac{\mathrm{d}}{\mathrm{d}t} \left(\sum_{j=1}^n \left|Z_{ij}I_j\right| \right)
			&= \frac{\mathrm{d}}{\mathrm{d}t} \left(\sum_{j=1}^n \sqrt{a_{ij}^2 + b_{ij}^2} \right)t \\
			&= \sum_{j=1}^n \sqrt{a_{ij}^2 + b_{ij}^2} \\
			&\geq \sqrt{a^2 + b^2},
		\end{align} 
	\end{subequations} 
	where the inequality is due to successive application of trigonometric inequality. On the other hand, the voltage magnitude $|V_i|$ is
	\begin{multline}
	|V_i| = \left|v_{0,i} + z_i^TI_L\right| \\
	= \textstyle \sqrt{\left(\re (v_{0,i}) + \sum_{j=1}^n a_{ij}t\right)^2 + \left(\im (v_{0,i}) + \sum_{j=1}^n b_{ij}t\right)^2},
	\end{multline}
	and the derivative of $|V_i|$ with respect to $t$ is
	\begin{align}
		\frac{\mathrm{d}|V_i|}{\mathrm{d}t} = \frac{a(\re (v_{0,i}) + at) + b(\im (v_{0,i}) + bt)}{\sqrt{(\re (v_{0,i}) + at)^2 + (\im (v_{0,i}) + bt)^2}}.
	\end{align}
	Then, by Cauchy-Schwarz inequality we have $| \mathrm{d}|V_i| / \mathrm{d}t | \leq \sqrt{a^2 + b^2}$. Comparing with \eqref{eq:derivative:zi}, we see the claim holds.
	
	Suppose $\sum_{j=1}^n|Z_{ij}I_j(t_1)| \ge |V_i(t_1)|$ for some $t_1 \in (0,1)$ and $i \in \mathcal{N}_L$, then based on the fundamental theorem of calculus we have
	\begin{subequations}
		\begin{align}
			\sum_{j=1}^n|Z_{ij}I_j(1)| &= \sum_{j=1}^n|Z_{ij}I_j(t_1)| + \int_{t_1}^1 \left(\sum_{j=1}^n \left|Z_{ij}I_j\right| \right)' \ud t \\
			&\ge \sum_{j=1}^n|Z_{ij}I_j(t_1)| + \sqrt{a^2 + b^2}(1-t_1),
		\end{align} 
	\end{subequations}
	and
	\begin{subequations}
		\begin{align}
			|V_i(1)| &= |V_i(t_1)| + \int_{t_1}^1 |V_i(t)|' \ud t \\
			&\le |V_i(t_1)| + \sqrt{a^2 + b^2}(1-t_1).
		\end{align} 
	\end{subequations}
	The two inequalities imply that $\sum_{j=1}^n|Z_{ij}I_j(1)| \ge |V_i(1)|$, which is a contradiction since $v_1 \in \mathcal{S}_0$. We conclude the line segment between $v_0$ and $v_1$ lies in $\mathcal{S}_0$.
\end{proof}

\subsubsection{SOC representation of voltage stability constraint}
The voltage stability constraint \eqref{eq:newVSCOPF:cond} is not directly a convex constraint in the voltage variable $V_i$, however, we show that it can be reformulated as a convex constraint, more specifically, an SOC constraint in squared voltage magnitude $|V_i|^2$ providing $S_L$ is fixed. This SOC reformulation will be utilized in the following section for SOCP relaxation of VSC-OPF. 
\begin{proposition}\label{prop:VSCSOCP}
	Constraint \eqref{eq:newVSCOPF:cond} is SOC representable in the squared voltage magnitude $|V_i|^2$'s, i.e. \eqref{eq:newVSCOPF:cond} can be reformulated using SOC constraints in $|V_i|^2$'s.
\end{proposition}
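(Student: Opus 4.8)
The plan is to pass to the squared-magnitude variables $w_i := |V_i|^2 = e_i^2 + f_i^2$ and then lift. Because $S_L$ is fixed in the OPF, each coefficient $A_{ij} = |Z_{ij}S_j| \ge 0$ is a constant, so \eqref{eq:newVSCOPF:cond} couples only the load-bus quantities $\sqrt{w_i}$ and $1/\sqrt{w_j}$; explicitly it reads $\sqrt{w_i} - \sum_{j=1}^{n} A_{ij}/\sqrt{w_j} \ge \underline{t}_i$ for $i \in \mathcal{N}_L$. I would introduce, for each load bus $j$, a variable $s_j \ge 0$ standing for $\sqrt{w_j}$, and for each ordered pair $(i,j)$ a variable $y_{ij} \ge 0$ standing for $A_{ij}/s_j$, and propose the lifted system
\begin{align*}
& s_j^2 \le w_j, && j \in \mathcal{N}_L, \\
& s_j\, y_{ij} \ge A_{ij}, && i, j \in \mathcal{N}_L, \\
& s_i \ge \underline{t}_i + \textstyle\sum_{j=1}^{n} y_{ij}, && i \in \mathcal{N}_L,
\end{align*}
in the variables $(w, s, y)$, with $w_i = e_i^2 + f_i^2$ linking back to the OPF voltages. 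Each constraint of the first family is a rotated second-order cone constraint ($s_j^2 \le w_j \cdot 1$ with $s_j, w_j \ge 0$); each constraint of the second family is a hyperbolic constraint with constant right-hand side, hence equivalent to $\|(s_j - y_{ij},\, 2\sqrt{A_{ij}})\|_2 \le s_j + y_{ij}$, a second-order cone constraint; and the third family is linear. Thus the lifted system is an SOCP description, and it remains only to certify that its projection onto the $w$ variables is exactly the set cut out by \eqref{eq:newVSCOPF:cond}.

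For exactness I would establish both inclusions. Given $w$ with $w_j > 0$ satisfying \eqref{eq:newVSCOPF:cond}, set $s_j := \sqrt{w_j}$ and $y_{ij} := A_{ij}/s_j$; then $s_j^2 = w_j$, $s_j y_{ij} = A_{ij}$, and the third inequality is exactly \eqref{eq:newVSCOPF:cond}, so $(w, s, y)$ is feasible. Conversely, $s_j^2 \le w_j$ with $s_j \ge 0$ gives $0 \le s_j \le \sqrt{w_j}$, whence $A_{ij}/\sqrt{w_j} \le A_{ij}/s_j \le y_{ij}$ from the second family (the case $A_{ij} = 0$ being trivial, and $s_j > 0$ whenever some $A_{ij} > 0$, the only relevant case); combining with $\sqrt{w_i} \ge s_i \ge \underline{t}_i + \sum_{j} y_{ij}$ from the third family recovers $\sqrt{w_i} - \sum_{j} A_{ij}/\sqrt{w_j} \ge \underline{t}_i$. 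Substituting $w_i = e_i^2 + f_i^2$ then embeds the reformulation back into the OPF model.

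The two SOC rewritings, the rotated-cone form of $s_j^2 \le w_j$ and the hyperbolic-constraint form of $s_j y_{ij} \ge A_{ij}$, are textbook, so I would simply cite them; the real content is the exactness step, and that is where I would be most careful. The one subtlety is that $s_j^2 \le w_j$ forces $s_j$ only to be a \emph{lower} bound on $\sqrt{w_j}$, yet this is harmless precisely because $s_j$ enters \eqref{eq:newVSCOPF:cond} solely through $+s_i$ and through $-A_{ij}/s_j$, so underestimating $\sqrt{w_\cdot}$ is conservative on both terms and one can always tighten $s_j$ up to $\sqrt{w_j}$. I would also record, as a consistency check weaker than SOC representability, that the feasible set is genuinely convex in the $|V_i|^2$'s, since the left-hand side of \eqref{eq:newVSCOPF:cond} is the sum of the concave map $w_i \mapsto \sqrt{w_i}$ and the concave maps $w_j \mapsto -A_{ij}/\sqrt{w_j}$.
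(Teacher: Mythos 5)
Your proposal is correct and follows essentially the same route as the paper: lift with a nonnegative variable that lower-bounds $\sqrt{|V_i|^2}$ and variables that upper-bound the reciprocal terms, convert the resulting hyperbolic inequalities into SOC form via the difference-of-squares identity, and exploit the one-sided monotone structure of \eqref{eq:newVSCOPF:cond} so the lifted system projects exactly onto the original feasible set. The only difference is bookkeeping: the paper introduces a single reciprocal variable $z_j$ per load bus via $x_j z_j \ge 1$ and keeps the constants $A_{ij}$ in the linear constraint, so it needs only $n$ hyperbolic constraints instead of your $n^2$ pairwise constraints $s_j y_{ij} \ge A_{ij}$, while the exactness-of-projection argument you spell out in both directions is left implicit there.
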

\begin{proof}
	First of all, introduce variable $c_{ii} := |V_i|^2$, and $x_i, z_i$ for each bus $i \in \mathcal{N}_L$ such that
	\begin{align}
		x_i & \leq \sqrt{c_{ii}}, \label{8}\\
		x_iz_i & \geq 1, \label{9} \\
		x_i & \geq 0. \notag
	\end{align}
	Note that $x_iz_i = (\frac{x_i+z_i}{2})^2 - (\frac{x_i-z_i}{2})^2$ and $c_{ii} = (\frac{c_{ii}+1}{2})^2 - (\frac{c_{ii}-1}{2})^2$, then we see both (\ref{8}) and (\ref{9}) can be rewritten as the following SOC constraints
	\begin{align}
		\sqrt{x_i^2 + \frac{(c_{ii}-1)^2}{4}} & \leq \frac{c_{ii}+1}{2}, \\
		\sqrt{1 + \frac{(x_i - z_i)^2}{4}} & \leq \frac{x_i + z_i}{2}.
	\end{align}
	Therefore, by defining $A_{ij} = |Z_{ij}S_j|$, \eqref{eq:newVSCOPF:cond} can be equivalently represented as
	\begin{subequations}\label{eq:VSconstrReform}
		\begin{align}
			& x_i - \sum_{j=1}^n A_{ij}z_j \geq \underline{t}_i, \label{eq:condLinear}\\
			&\left\|[x_i, (c_{ii}-1)/2]^T\right\|_2 \le (c_{ii}+1)/2 , \label{eq:VScr2}\\
			& \left\|[1, (x_i - z_i)/2]^T\right\|_2 \leq (x_i + z_i)/{2}, \label{eq:VScr3}\\
			& x_i \geq 0, \label{eq:VScr4}
		\end{align}
	\end{subequations}
	for every bus $i \in \mathcal{N}_L$, which are SOCP constraints. 
\end{proof}

\subsection{SOCP relaxation of VSC-OPF}
By Proposition \ref{prop:VSCSOCP}, the voltage stability condition \eqref{condition} is reformulated as SOCP constraints \eqref{eq:VSconstrReform}. However, the power flow equations \eqref{eq:ACOPF:realpf}-\eqref{eq:ACOPF:reactpf} are still nonconvex. In the following, we propose an SOCP relaxation of the proposed VSC-OPF model \eqref{eq:newVSCOPF} by combining the SOC reformulation of the voltage stability constraint \eqref{eq:VSconstrReform} with the recent development of SOCP relaxation of standard AC-OPF \cite{Kocuk2016SOCP}. In particular, for each line $(i,j) \in \mathcal{E}$, define 
\begin{subequations}\label{eq:cs-def}
	\begin{align}
		c_{ij} &= e_ie_j + f_if_j \label{eq:c-def}\\
		s_{ij} &= e_if_j - e_jf_i. \label{eq:s-def}
	\end{align}
\end{subequations}
An implied constraint of \eqref{eq:c-def}-\eqref{eq:s-def} is the following:
\begin{align} \label{eq:cs-eq}
	c_{ij}^2 + s_{ij}^2 = c_{ii}c_{jj}.
\end{align}
Now we can introduce the following SOCP relaxation of the VSC-OPF model \eqref{eq:newVSCOPF} in the new variables $c_{ii}$, $c_{ij}$, and $s_{ij}$ as follows
\begin{subequations} \label{eq:VSC-OPF-SOCP}
	\begin{align}
		\mathrm{min} \quad
		& \sum_{i \in \mathcal{N}_G} f_i(P_{G_i}) \notag \\
		\mathrm{s.t.} \quad
		& P_{G_i} - P_{D_i} = G_{ii}c_{ii} + \sum_{j \in N(i)} P_{ij}, \ i \in \mathcal{N} \label{eq:P-cs}\\
		& Q_{G_i} - Q_{D_i} = -B_{ii}c_{ii} + \sum_{j \in N(i)} Q_{ij}, i \in \mathcal{N} \label{eq:Q-cs} \\
		& \underline{V}_i^2 \leq c_{ii} \leq \overline{V}_i^2, \quad i \in \mathcal{N} \label{eq:V-cs}\\
		& c_{ij} = c_{ji}, \ s_{ij} = -s_{ji}, \quad (i,j) \in \mathcal{E} \label{eq:cs}\\
		& c_{ij}^2 + s_{ij}^2 \le c_{ii}c_{jj} \quad (i,j) \in \mathcal{E}  \label{eq:cs-square}\\
		& \eqref{eq:ACOPF:realg},\eqref{eq:ACOPF:reactg}, \eqref{eq:VSconstrReform} \notag
	\end{align}
\end{subequations}
where the power flow equations \eqref{eq:ACOPF:realpf}-\eqref{eq:ACOPF:reactpf} are rewritten in the $c,s$ variables as \eqref{eq:P-cs} and \eqref{eq:Q-cs}. $N(i)$ denotes the set of buses adjacent to bus $i$. The line real and reactive powers are $P_{ij} = G_{ij}c_{ij} - B_{ij}s_{ij}$ and $Q_{ij} = -G_{ij}s_{ij} - B_{ij}c_{ij}$. The nonconvex constraint \eqref{eq:cs-eq} is relaxed as \eqref{eq:cs-square}, which can be easily written as an SOCP constraint as $\|[c_{ij}, s_{ij}, (c_{ii} - c_{jj})/2]^T\|_2 \leq {(c_{ii}+c_{jj})}/{2}$. \eqref{eq:V-cs} is a linear constraint in the square voltage magnitude $c_{ii}$. Notice that the SOCP formulation of the voltage stability constraint \eqref{eq:VSconstrReform} is not a relaxation, but an exact formulation of the original voltage {stability} condition \eqref{eq:newVSCOPF:cond}, and it fits nicely into the overall SOCP relaxation of the VSC-OPF model \eqref{eq:VSC-OPF-SOCP}. We have employed the basic SOCP relaxation of the AC-OPF in \eqref{eq:VSC-OPF-SOCP}. There are many ways to strengthen the relaxation, see \cite{Kocuk2016SOCP} for a few formulations. The main advantage of the adopted formulation lies in its speed, which may proven crucial for certain online applications. On the other hand, the main point we try to convey in the paper regarding the convex formulation is that the proposed voltage stability constraint is in fact second-order cone representable. This simple fact means that the constraint can be integrated in any other SOCP relaxation as well.

\subsection{Sparse approximation of SOCP relaxation}

Due to the density of stability condition \eqref{eq:condLinear}, the computation times of the VSC-OPF formulation \eqref{eq:VSC-OPF-SOCP} are significantly longer than normal OPF especially for larger power systems. The differences in computation time can be observed from Table I, where it is seen that for large IEEE instances, VSC-OPF is much slower than AC-OPF. The term `density' refers to the fact that each voltage stability constraint in \eqref{eq:condLinear} is coupled with almost all load buses since the matrix $A$ in \eqref{eq:condLinear} is dense. This is to be contrasted with the power flow equations or line flow constraints where the admittance matrices are sparse and power injection of a bus is only a function of its voltage phasor as well as those of its neighboring buses. Fig. \ref{fig:sparse300bus} shows the sparsity pattern of the matrix $A$ for IEEE 300-bus system, it can be seen from Fig. \ref{fig:sparse300bus}a that almost all entries are nonzero even though most of them are very small. To better discern the relative magnitude of the entries, we set entries less than $5\times 10^{-4}$ to zero in Fig. \ref{fig:sparse300bus}b, now the heat map becomes much sparser which suggests that a majority of entries are indeed small ($<5\times 10^{-4}$). Therefore, in order to speed up computation, we can approximate most of the entries by constants without sacrificing too much accuracy. 


\begin{figure} 
	\centering
	\subfloat[Orignal sparsity pattern.]{%
		\includegraphics[width=0.5\linewidth]{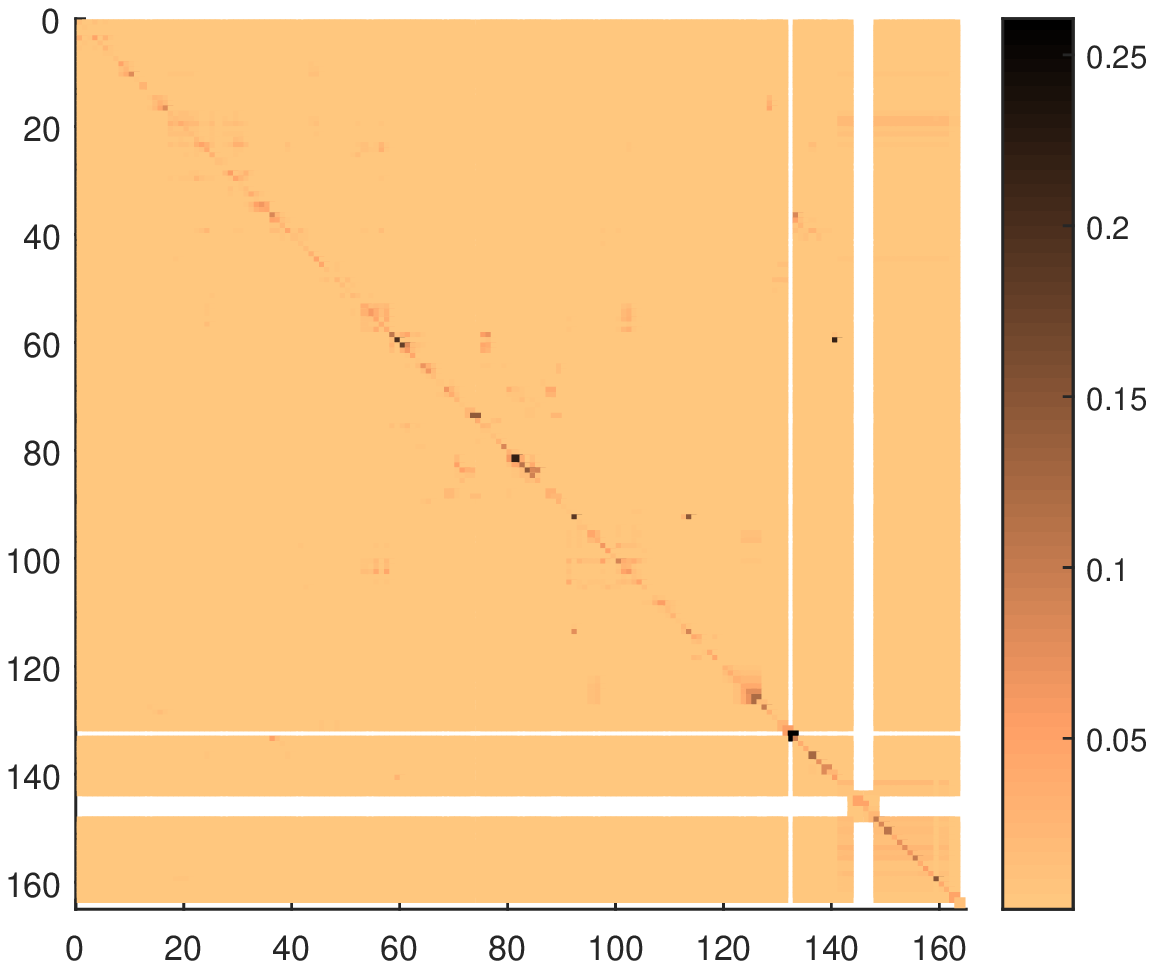}}
	\label{fig:sparse300a}\hfill
	\subfloat[Sparsity pattern with entries less than $5\times 10^{-4}$ set to zero.]{%
		\includegraphics[width=0.5\linewidth]{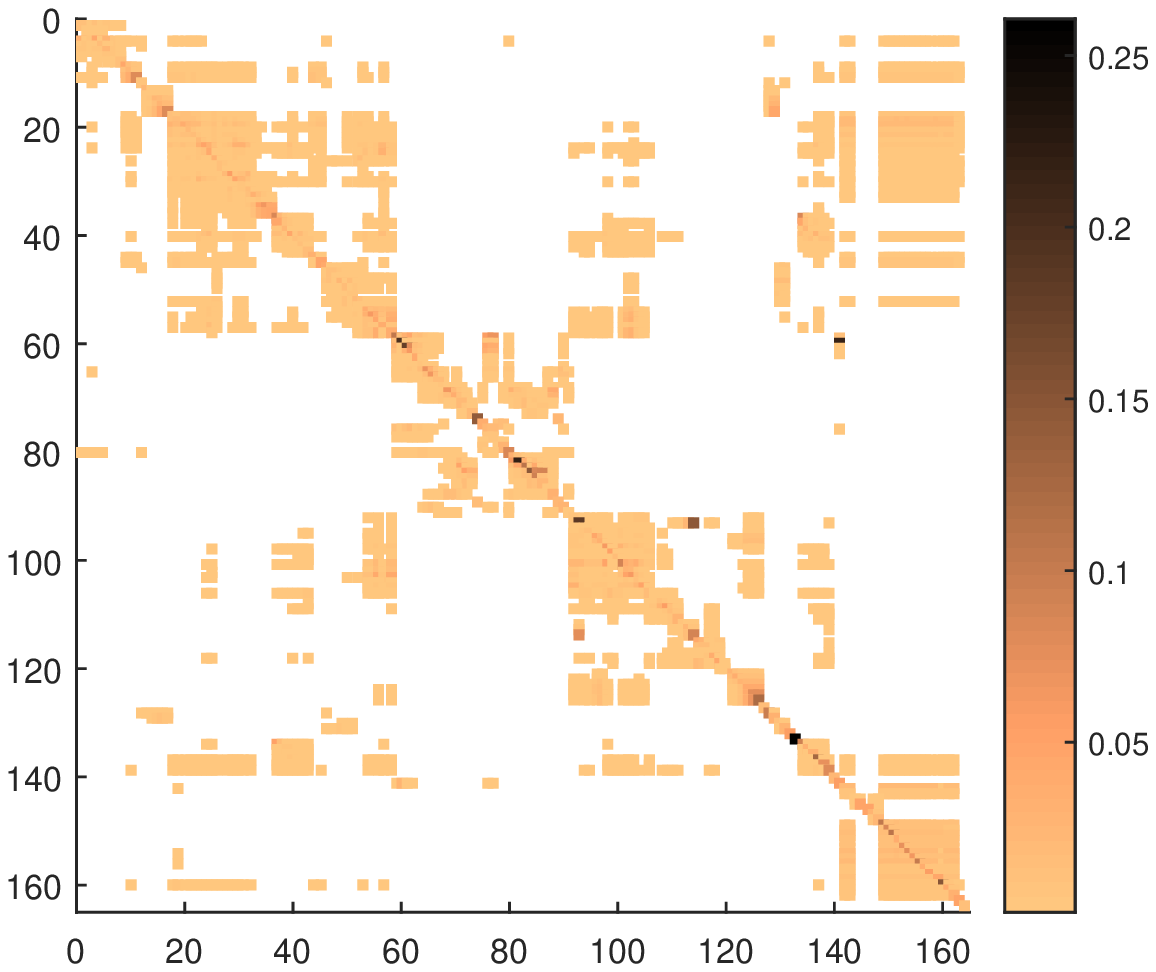}}
	\label{fig:sparse300b}
	\caption{Sparsity pattern of matrix $A$ for IEEE 300-bus system.}
	\label{fig:sparse300bus} 
\end{figure}

The first step of the approximation is to approximate the coeffcient matrix $A$ of the stability constraints by a sparse matrix $\tilde A$. To illustrate our approach of sparse approximation, we  rewrite the linear constraint \eqref{eq:condLinear} in matrix-vector form as
\begin{equation} \label{eq:condMatrix}
x - Ay \geq \underline{t}.
\end{equation} 
Then the approach to construct the sparse approximate matrix $\tilde A$ can be summarized as in Algorithm \ref{alg:sparsify}.

\begin{algorithm}
\caption{Sparse approximation of $A$}
\begin{algorithmic}
\State {$\gamma \gets \gamma_0 \quad $} 
\Comment {initialize tunable sparsity parameter}
\State {$\tilde A \gets \mathbf{0}_{n\times n}$} 
\Comment {initialize $\tilde A$}
\For {$1 \leq i \leq n$}
	\State {$RS \gets \sum_j A_{ij} $}
	\Comment {compute $i$th row sum of matrix $A$}
	\While {$\sum_j \tilde A_{ij} < \gamma RS$}
		\State {$j_{\max} \gets \arg \max a_i$}
		\State {$\tilde A_{i,j_{\max}} \gets A_{i,j_{\max}}$}
		\State {$A_{i,j_{\max}} \gets 0$}
	\EndWhile
\EndFor
\end{algorithmic}
\label{alg:sparsify}
\end{algorithm}
Simply put, for each row of matrix $A$, Algorithm \ref{alg:sparsify} constructs the corresponding row of the approximate matrix $\tilde A$ by ignoring all elements except the largest ones whose sum amounts to more than $\gamma$ of the total row sum. We notice that the element $Z_{ij}$ of the impedance matrix can be understood as the coupling intensity measure between buses $i$ and $j$. Thanks to the sparsity of practical power systems, each bus is only strongly coupled with its neighboring buses and weakly coupled with most other buses. Therefore, the matrix $\tilde A$ is generally sparse. We notice a similar approximation has been applied to the $L$-index in the context of PMU allocation \cite{Pordanjani13}. The connection between $L$-index and the proposed stability condition has been discussed in Section \ref{sect:condition} and more extensively in \cite{Wang17}.

Then \eqref{eq:condMatrix} can be approximated by
\begin{equation} \label{eq:sparseCond}
x - \tilde Ay \geq \underline{t} + \Delta a / \overline{V},
\end{equation}
where $\Delta a \in \mathbb{R}^n$ is the row sum difference between $A$ and $\tilde A$ that is defined as $\Delta a_i = \sum(a_i - \tilde a_i)$ and $\overline{V} = \max\{\overline{V}_i \mid i \in \mathcal{N}_L \}$. We have thus obtained the sparse VSC-OPF formulation which is identical to \eqref{eq:VSC-OPF-SOCP} except the stability constraint \eqref{eq:condLinear} is replaced by \eqref{eq:sparseCond}. The new formulation is presented as
\begin{subequations} \label{eq:sparse_VSC-OPF-SOCP}
	\begin{align}
	\mathrm{min} \quad
	& \sum_{i \in \mathcal{N}_G} f_i(P_{G_i}) \notag \\
	\mathrm{s.t.} \quad
	& P_{G_i} - P_{D_i} = G_{ii}c_{ii} + \sum_{j \in N(i)} P_{ij}, \ i \in \mathcal{N} \label{eq:P-scs}\\
	& Q_{G_i} - Q_{D_i} = -B_{ii}c_{ii} + \sum_{j \in N(i)} Q_{ij}, i \in \mathcal{N} \label{eq:Q-scs} \\
	& \underline{V}_i^2 \leq c_{ii} \leq \overline{V}_i^2, \quad i \in \mathcal{N} \label{eq:V-scs}\\
	& c_{ij} = c_{ji}, \ s_{ij} = -s_{ji}, \quad (i,j) \in \mathcal{E} \label{eq:scs}\\
	& c_{ij}^2 + s_{ij}^2 \le c_{ii}c_{jj} \quad (i,j) \in \mathcal{E}  \label{eq:scs-square}\\
	& \eqref{eq:ACOPF:realg},\eqref{eq:ACOPF:reactg}, \eqref{eq:VScr2}\text{--}\eqref{eq:VScr4}, \eqref{eq:sparseCond} \notag
	\end{align}
\end{subequations}

We notice that feasibility of problem \eqref{eq:sparse_VSC-OPF-SOCP} is implied by the feasibility of the original problem \eqref{eq:VSC-OPF-SOCP}. To see this we only need to focus on \eqref{eq:sparseCond} and \eqref{eq:condLinear}, from which we have
\begin{align*}
x - Ay & \leq x - \tilde A y - (\Delta a)y_{\min} \\
& \leq x - \tilde A y - \Delta a / \overline{V},
\end{align*}
where the last inequality comes from \eqref{8}, \eqref{9}, \eqref{eq:V-cs}.

\section{Computational Experiments} \label{sect4}

\begin{table*}[!t]
	\centering
	\caption{Results Summary for Standard IEEE Instances.}
	\begin{tabular}{ |c||c|c||c|c|c|c|c|c|}
		\hline
		\multirow{2}{*}{Test Case} & \multicolumn{2}{c||}{Cost (\$/h)} & \multirow{2}{*}{OG (\%)} &  \multirow{2}{*}{$\underline{t}$} & \multirow{2}{*}{$t_a^{AC}$} & \multirow{2}{*}{$\Delta \lambda_{\max}^{AC}$  (\%)} & \multirow{2}{*}{$\Delta \sigma_{\min}^{AC} (\%)$} & \multirow{2}{*}{DS (\%)}\\
		\cline{2-3}
		& AC & SOCP & & & & & & \\
		\hline
		\hline
		case24\_ieee\_rts & $64059.32$ & $63344.99$ &  $1.12$ & $0.86$ & $0.86$ & $0.12$ & $0.16$ & $0.08$	\\
		\hline
		case30 & $577.16$ & $574.90$ & $0.39$ & $0.97$ & $0.97$ & $5.02$ & $0.00$ & $0.07$ \\
		\hline
		case\_ieee30 & $9985.41$ & $9220.51$ & $7.66$  & $0.88$ & $0.88$ & $7.92$ & $3.75$ & $0.60$	\\
		\hline
		case39 & $43667.91$ & $42552.76$ &  $2.55$  & $0.83$ & $0.83$ & $6.49$ & $0.32$ & $0.48$ \\
		\hline
		case57 & $41737.79$ & $41710.91$ &  $0.06$ & $0.66$ & $0.66$ & $0.02$ & $0.02$ & $0.31$	\\
		\hline
		case89pegase & $5849.28$ & $5810.12$ & $0.67$ & $0.72$ & $0.72$ & $2.22$ & $0.21$ & $2.61$ \\
		\hline
		case118 & $130009.61$ & $129385.66$ & $0.48$ & $0.98$ & $0.98$ & $-0.21$ & $0.33$ & $0.44$	\\
		\hline
		case300 & $724935.75$ & $718655.31$ &   $0.87$  & $0.29$ & $0.29$	& $-0.30$ & $1.13$ & $1.03$ \\
		\hline
		case1354pegase & $74062.27$ & $74000.28$ & $0.08$ & $0.64$ & $0.64$ & $0.87$ & $0.00$ & $0.93$ \\
		\hline
		case2383wp & $1857927.67$ & $1846897.40$ & $0.59$ & $0.77$ & $0.77$ & $0.00$ & $0.00$ & $1.64$ \\
		\hline
		\textbf{average} & ------ & ------ & \bm{$1.45$} & \bm{$0.76$} & \bm{$0.76$} & \bm{$1.99$} & \bm{$0.59$} & \bm{$0.82$} \\
		\hline
	\end{tabular}
	\label{tb:IEEE}
\end{table*}

\begin{figure*}[!t]
	\centering
	\begin{tikzpicture}
	\begin{axis}[
	ybar,
	bar width=.3cm,
	width=\textwidth,
	height=.4\textwidth,
	tick label style={font=\small},
	tickpos=left,
	xticklabels={24,30fr, 30ieee, 73, 89, 189, 1354, 1394, 1397, 1460, 2224, 2383, 2736, 2737}, 
	xtick={1,2,3,4,5,6,7,8,9,10,11,12,13,14},
	ymin=-0.5,
	legend entries={$\Delta \sigma_{\min}$ (\%), $\Delta \lambda_{\max}$ (\%)},
	y tick label style={/pgf/number format/.cd,%
		scaled y ticks = false,
		set thousands separator={},
		fixed
	},
	]
	\addplot +[bar shift=-.2cm, area legend] coordinates {
		(1,1.614138294
		) (2,1.219284547
		) (3,2.817172315
		) (4,2.375154227
		) (5,1.221122272
		) (6,6.927146766
		) (7,1.29628E-06
		) (8,1.10514337
		) (9,-7.02824E-07
		) (10,1.734565891
		) (11,1.486477192
		) (12,4.966346089
		) (13,1.590695207
		) (14,2.81090271
		)};
	
	\addplot  +[bar shift=.2cm, area legend]coordinates {
		(1,6.64169266
		) (2,3.990608214
		) (3,2.37224625
		) (4,9.497117698
		) (5,1.306468858
		) (6,5.856035256
		) (7,0.025616501
		) (8,0.38968042
		) (9,-1.21444E-08
		) (10,1.219317389
		) (11,-0.001228333
		) (12,4.794471844
		) (13,0.897680592
		) (14,0.050938448
		)};
	\end{axis}
	\end{tikzpicture}
	\caption{Results Summary for NESTA Instances From Congested Operating Conditions.}
	\label{fig:NESTAbar}
\end{figure*}
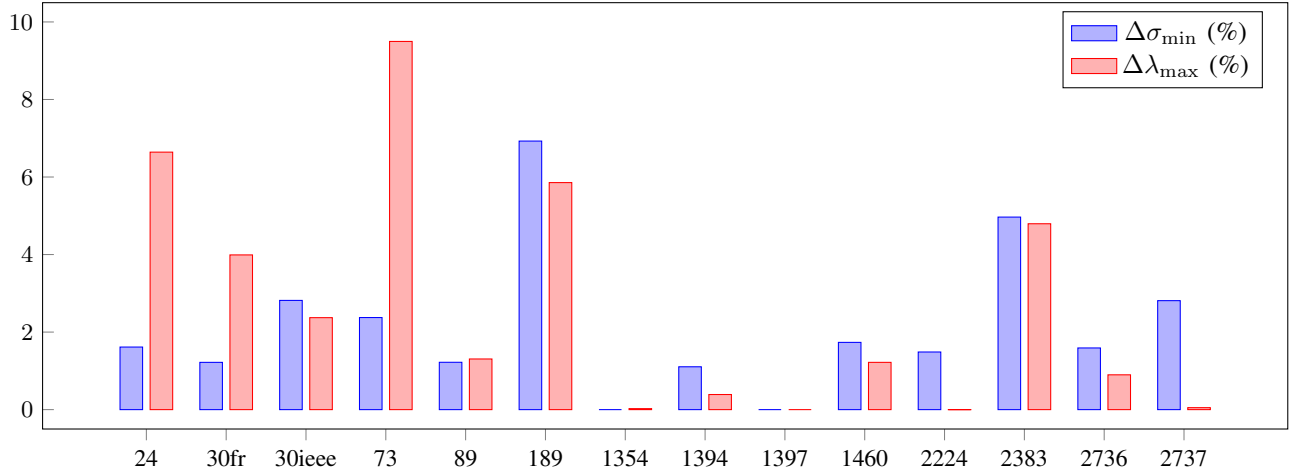

In this section, we present extensive computational results on the proposed VSC-OPF model \eqref{eq:newVSCOPF}, its SOCP relaxation \eqref{eq:VSC-OPF-SOCP}, and the sparse approximation \eqref{eq:sparse_VSC-OPF-SOCP} tested on standard IEEE instances available from \textsc{Matpower} \cite{Zimmerman11} and instances from the \textsc{NESTA} 0.6.0 archive \cite{Coffrin14}. The code is written in \textsc{Matlab}. For all experiments, we used a 64-bit computer with Intel Core i7 CPU 2.60GHz processor and 4 GB RAM. We study the effectiveness of the proposed VSC-OPF on achieving voltage stability, the tightness of the SOCP relaxation for the VSC-OPF, as well as the speed-up and accuracy of the sparse approximation.

Two different solvers are used for VSC-OPF:
\begin{itemize}
	\item Nonlinear interior point solver IPOPT \cite{Wachter06} is used to find local optimal solutions to VSC-OPF.
	\item Conic interior point solver MOSEK 7.1 \cite{mosek13} is used to solve the SOCP relaxation of VSC-OPF.
\end{itemize}

\subsection{Method}

Below we briefly describe the methodologies used in this section to evaluate and demonstrate the effectiveness of the proposed VSC-OPF formulation.

\subsubsection{Evaluating the performance of the proposed VSC-OPF}
During normal operating conditions, the voltage stability condition (\ref{condition}) is normally satisfied. That is, at least for lightly loaded IEEE test cases in \textsc{Matpower}, the constraint \eqref{eq:newVSCOPF:cond} with small $\underline{t}$ will not be binding. This is to be expected, since the stability margins of systems under normal operating conditions are relatively high. To evaluate the formulation in a more meaningful way, we set the margin threshold $\underline{t}$ as follows.

To determine the voltage stability threshold in \eqref{eq:newVSCOPF} for each test instance, we first solve a \emph{minimum threshold maximization problem}. That is, we maximize the minimum value of the left hand side of \eqref{eq:newVSCOPF:cond} among all load buses subject to power flow, nodal voltage, and generation constraints \eqref{eq:ACOPF:realpf} -- \eqref{eq:ACOPF:vol}. The threshold $\underline{t}_i$ in \eqref{eq:newVSCOPF:cond} is set as the slightly decreased maximum threshold from the optimal objective value. In this way, we try to force the voltage stability constraint \eqref{eq:newVSCOPF:cond} to be binding and examine the effect of restraining a high $\underline{t}_i$ on system voltage stability improvement.
	
For comparison, we also solve a \emph{relaxed OPF problem} for each test instance, which is the same as \eqref{eq:newVSCOPF} except that the voltage stability constraint \eqref{eq:newVSCOPF:cond} is unbounded. Two votlage stability indices, i.e. the MSV of the reduced power flow Jacobian $J_{LL}$ and the loading margins to voltage instability of the VSC-OPF formulation \eqref{eq:newVSCOPF} and the relaxed OPF problem are compared. It is expected that constraint \eqref{eq:newVSCOPF:cond} restrains system stability level such that level of stability is improved and voltage stability indices for the VSC-OPF formulation are superior to that of the relaxed OPF problem.

\subsubsection{Recovering bus voltage phasors from SOCP relaxation}

To evaluate the SOCP relaxation \eqref{eq:VSC-OPF-SOCP}, in addition to examine the optimality gap, we compare the MSV obtained by solving \eqref{eq:VSC-OPF-SOCP} with the one obtained from the original problem \eqref{eq:newVSCOPF}, which requires the recovery of nodal voltages. We know the variables $c_{ii}$ are simply the squared bus voltage magnitudes, so bus voltage magnitudes can be directly recovered from SOCP results. To recover voltage phase angles, we use the following relationship:
\begin{equation} \label{eq:thetadiff}
	A_{inc}^T\theta = b
\end{equation}
where $A_{inc}$ is the bus incidence matrix and $b$ is the vector of phase angle differences which can be calculated from SOCP results as $b_k = \atan2 (s_{ij}, c_{ij})$\footnote{$\atan2(y,x) = \begin{cases}
\arctan \frac{y}{x} & x>0 \\ \arctan\frac{y}{x} + \pi & y \ge 0, x<0 \\ \arctan\frac{y}{x} - \pi & y<0, x<0 \\ +\frac{\pi}{2} & y > 0, x = 0 \\ -\frac{\pi}{2} & y < 0, x = 0 \\ \mathrm{undefined} & y = 0, x = 0 \end{cases}$} if $(i,j)$ is the $k$th branch in $\mathcal{E}$. Denote the number of buses by $n_g := n+m$ and number of branches by $n_\ell$, then $n_\ell > n_g$ for almost all meshed networks, and the system \eqref{eq:thetadiff} is overdetermined. We find the least squares solution of \eqref{eq:thetadiff} through pseudoinverse of the bus incidence matrix:
\begin{equation}
	\tilde{\theta} = (A_{inc}^T)^\dagger b.
\end{equation}
Therefore the phase angle of bus voltages can be recovered and the voltage at bus $i$ is given as $V_i = \sqrt{c_{ii}}e^{\mathrm{i}\tilde{\theta}_i}$. The recovered voltages will be used to calculate the MSVs of the SOCP-VSC-OPF results.

\subsection{Results and discussions}
The results of our computational experiments on VSC-OPF and its SOCP relaxation are presented in Table \ref{tb:IEEE} and Fig. \ref{fig:NESTAbar} for standard IEEE and NESTA instances, respectively. The ``Cost'' columns in Table \ref{tb:IEEE} shows the objective values of the VSC-OPF model \eqref{eq:newVSCOPF} and its SOCP relaxation \eqref{eq:VSC-OPF-SOCP}. In addition, six sets of information are provided in Table \ref{tb:IEEE}:
\begin{itemize}
	\item OG(\%) is the percentage optimality gap between the lower bound $LB$ of the objective value obtained from the SOCP relaxation of VSC-OPF \eqref{eq:VSC-OPF-SOCP} and an upper bound $UB$ obtained from \eqref{eq:newVSCOPF} by IPOPT. It is calculated as $100\% \times (1 - LB/UB)$.
	\item $\underline{t}$ is the fixed voltage stability threshold used in the optimization problem (right hand side of \eqref{eq:newVSCOPF:cond}). 
	\item $t_a^{AC}$ is the minimum value of $|V_i| - \sum_{j=1}^n A_{ij}/|V_j|$ for all load bus $i$ calculated \emph{after} solving VSC-OPF \eqref{eq:newVSCOPF}.
	\item $\Delta \lambda_{\max}^{AC} (\%)$ is the percentage increase of loading margins of VSC-OPF \eqref{eq:newVSCOPF} ($\lambda_1$) and that of its relaxed OPF counterpart ($\lambda_2$) calculated as $100\% \times (\lambda_1/\lambda_2 - 1)$. The loading margin is the maximum loading multiplier such that the power flow Jacobian remains nonsingular under proportional load and generation increase. They are calculated using the Continuation Power Flow tool in \textsc{Matpower}.
	\item $\Delta \sigma_{\min}^{AC} (\%)$ is the percentage increase of MSV of the reduced power flow Jacobian of VSC-OPF \eqref{eq:newVSCOPF} ($\sigma_1$) and that of its relaxed OPF counterpart ($\sigma_2$) calculated as $100\% \times (\sigma_1/\sigma_2 - 1)$.
	\item DS(\%) is the percentage difference between the MSV $\sigma_{\min}^{AC}$ obtained from AC-OPF \eqref{eq:newVSCOPF} and the MSV $\sigma_{\min}^{SOCP}$ from the SOCP relaxation. calculated as $100\% \times |\sigma_{\min}^{SOCP} / \sigma_{\min}^{AC} - 1|$.
\end{itemize}

\subsubsection{Stability margin improvement}
As shown by Table \ref{tb:IEEE}, on average, the proposed VSC-OPF model improves the loading margin by about 2\% for the IEEE instances over the relaxed OPF problem with unbounded voltage stability constraint. We also see than several instances have significantly larger improvements. For example, case30, case\_ieee30 of 30-bus system and case39 of 39-bus system all have more than 5\% improved loading margins; it is seen from Fig. \ref{fig:NESTAbar} that several instances in the NESTA archive have more than 5\% loading margin increase as well, for instance 24-bus, 73-bus, and 189-bus systems. It is worth noting that there are two IEEE instances (118-bus and 300-bus systems) where the loading margins decrease. This does not necessarily mean the system voltage stability level is worsen as the loading marin is only measured along a specific ray of loading variation. In fact, the MSVs of the two instances both increase, suggesting the overall stability condition may be improved.

As for the MSV, we see from Table \ref{tb:IEEE} that for IEEE instances the increase are all nonnegative, with an average value of 0.59\%. This is consistent with our discussion in Section \ref{sect:condition} that the voltage stability constraint \eqref{eq:newVSCOPF:cond} helps preserve the diagonal dominance of the transformed power flow Jacobian. In fact, the increase of MSVs for NESTA instances are all nonnegative as well. In addtion, we see from Fig. \ref{fig:NESTAbar} that there are a few instances that experience large MSV increase, notably 189-bus and 2383-bus systems. We also see that there are instances for which both $\Delta \sigma_{\min}$ and $\Delta \lambda_{\max}$ are small, which may indicate that the relaxed OPF problems already yield solutions that have high voltage stability levels.

\subsubsection{Tightness of SOCP relaxation}

Table \ref{tb:IEEE} shows the average optimality gap between the SOCP relaxation \eqref{eq:VSC-OPF-SOCP} and a local solution of the non-convex VSC-OPF \eqref{eq:newVSCOPF} is about 1.45\%. The optimality gap is quite small, but still larger compared with the standard OPF. This can be attributed to the fact that the flow limits for IEEE instances are high and most of them are not binding in standard OPF, while the voltage stability constraints for VSC-OPF are binding in our experiment.

\subsubsection{Effect of sparse approximation}

\begin{figure}[!t]
	\centering
	\resizebox{8.9cm}{!}{
	\begin{tikzpicture}
	\pgfplotsset{
		xmin = 0, xmax = 1,
		ymin = 0, ymax = 1200,
		xtick={0.1,0.2,0.3,0.4,0.5,0.6,0.7,0.8,0.9,1},
		xticklabels={0.98,,0.94,,0.9,,0.86,,0.82,},
		y axis style/.style={
			yticklabel style=#1,
			ylabel style=#1,
			y axis line style=#1,
			ytick style=#1
		}
	}
	
	\begin{axis}[
	axis y line*=left,
	y axis style=blue!75!black,
	ymin=0, ymax=100,
	xlabel= Tuning parameter $\gamma$,
	ylabel= Computation Time (sec)
	]
	\addplot[mark=x,blue] 
	coordinates{
		(0.0,1190.98)
		(0.1,25.24157303)
		(0.2,18.65813821) 
		(0.3,15.87233406)
		(0.4,13.48229338)
		(0.5,10.83181464) 
		(0.6,9.517315022)
		(0.7,8.871685079)
		(0.8,8.647633729)
		(0.9,7.907575976)
		(1.0,7.450971003)
	}; \label{Hplot}
	\end{axis}
	
	\begin{axis}[
	hide y axis
	]
	\addlegendimage{/pgfplots/refstyle=Hplot}\addlegendentry{Time}
	\addplot[mark=*,red] 
	coordinates{
		(0,0)
		(0.1,2.421822102/15*1200)
		(0.2,3.430384924/15*1200) 
		(0.3,4.116964053/15*1200)
		(0.4,4.991235847/15*1200)
		(0.5,6.011817768/15*1200)
		(0.6,7.247930408/15*1200)
		(0.7,7.879719321/15*1200)
		(0.8,9.256900766/15*1200)
		(0.9,10.05782037/15*1200)
		(1.0,11.59135124/15*1200)
	}; \addlegendentry{Error}
	\end{axis}
	
	\pgfplotsset{every axis y label/.append style={rotate=180,yshift=9cm}}
	\begin{axis}[
	xmin=0, xmax=1,
	ymin=0, ymax=15,
	hide x axis,
	axis y line*=right,
	ylabel= Error (\%),
	y axis style=red
	]
	\end{axis}
	\end{tikzpicture}}
	\caption{Sparse approximation of NESTA 2737-bus test system}
	\label{fig:sparse}
\end{figure}

The result summary of our computational experiments on the sparse approximation of VSC-OPF for large NESTA instances are presented in Table \ref{tb:NESTA_sparse}. The sparsity parameter in Algorithm \ref{alg:sparsify} is chosen to be 0.98. The ``Time'' columns in the table show the computation time of the VSC-OPF model \eqref{eq:VSC-OPF-SOCP} and the sparse approximation \eqref{eq:sparse_VSC-OPF-SOCP}. In addition, the table reports two sets of data as described below:
\begin{itemize}
	\item DCT(\%) is the percentage time difference between the computation time $ct_n$ of \eqref{eq:VSC-OPF-SOCP} and $ct_s$ of \eqref{eq:sparse_VSC-OPF-SOCP}. It is calculated as $100\% \times (1 - ct_s / ct_n)$.
	\item DC(\%) is the percentage difference between the objective value $c^{SOCP}$ of the model \eqref{eq:VSC-OPF-SOCP} and $c^s$ of \eqref{eq:sparse_VSC-OPF-SOCP} calculated as $100 \% \times |c^s / c^{SOCP} - 1|$.
\end{itemize}

For NESTA systems with less than five buses, the sparse approximation \eqref{eq:sparse_VSC-OPF-SOCP} and the original SOCP relaxation model \eqref{eq:VSC-OPF-SOCP} are exactly the same. For system sizes ranging between 6 buses and 300 buses, the computation times of model \eqref{eq:VSC-OPF-SOCP} are sufficiently short (less than 2 seconds), which render the sparse approximation unnecessary. However, for systems with more than 1000 buses, the sparse approximation brings about significant speed-up. In fact, the speed-ups are above 90\% for all instances with more than 2000 buses and the optimal solutions are obtained in less than 30 seconds for all instances. Our simulation experiments suggest that the solution accuracies are extremely high. For larger systems with more than 1000 buses, the differences of cost between \eqref{eq:VSC-OPF-SOCP} and \eqref{eq:sparse_VSC-OPF-SOCP} are all less than 0.01\%.

\begin{table}[!t]
	\centering
	\caption{Results Summary of Sparse Approximation for Large NESTA Instances From Congested Operating Conditions.}
	\begin{tabular}{ |c||c|c||c|c|}
		\hline
		\multirow{2}{*}{Test Case} & \multicolumn{2}{c||}{Time (sec)} & \multirow{2}{*}{DCT (\%)} & \multirow{2}{*}{DC (\%)} \\
		\cline{2-3}
		& Normal & Sparse & & \\
		\hline
		\hline
		nesta\_case1354\_pegase & $25.04$ & $4.24$ & $83.05$ & $0.00$ 	\\
		\hline
		nesta\_case1394sop\_eir & $39.44$ & $9.75$ & $75.27$ & $0.00$	\\
		\hline
		nesta\_case1397sp\_eir & $40.42$ & $10.34$ & $74.42$ & $0.00$ \\
		\hline
		nesta\_case1460wp\_eir & $39.95$ & $10.54$ & $73.61$ & $0.00$  \\
		\hline
		nesta\_case2224\_edin & $274.68$ & $22.23$ & $91.91$ & $0.00$	\\
		\hline
		nesta\_case2383wp\_mp & $90.54$ & $6.92$ & $92.35$ & $0.00$  \\
		\hline
		nesta\_case2736sp\_mp & $496.59$ & $14.92$ &  $97.00$ & $0.00$	\\
		\hline
		nesta\_case2737sop\_mp & $1190.98$ & $25.24$ &  $97.04$ & $0.00$	\\
		\hline
		\textbf{average} & \bm{$274.71$} & \bm{$13.02$} & \bm{$85.58$} & \bm{$0.00$} \\
		\hline
	\end{tabular}
	\label{tb:NESTA_sparse}
\end{table}

Fig. \ref{fig:sparse} presents the results of our computational experiments on the sparse approximation of VSC-OPF for NESTA 2737-bus test instance with varying $\gamma$. This specific test instance is chosen since it is the largest instance we have experimented with and also the one that takes the longest computation time. In fact, it takes almost 1200 seconds to compute the optimal solution for the test instance. In the figure, blue cross shows the computation time and red dot shows the relative error of the MSV results. The relative error is calculated as $|\sigma_1 - \sigma_{\gamma}| / |\sigma_1 - \sigma_0|$ where $\sigma_1$, $\sigma_0$ and $\sigma_{\gamma}$ are the MSVs given by SOCP relaxation \eqref{eq:VSC-OPF-SOCP} ($\gamma = 1$), relaxed OPF problem ($\gamma = 0$), and sparse approximation with  tuning paramter $\gamma$. For this test instance, $\sigma_1 \approx 0.451$ and $\sigma_0 \approx 0.439$, it can be seen from Fig. \ref{fig:NESTAbar} that there is an approximately 3\% increase from $\sigma_0$ to $\sigma_1$. We see from Fig. \ref{fig:sparse} that the computation time sees a drastic decrease with a very small deviation of the tuning parameter from 1. Even with $\gamma$ as high as 0.98, the computation time can be reduced to within 30 seconds, while further decrease in $\gamma$ reduces the computation time but to a lesser extent, and the computation time gradually stabilizes at around 10 seconds. The relative error increases almost linearly with the decrease of $\gamma$. For $\gamma = 0.98$, the relative error is only around 3\%.

\subsection{Comparison with alternative VSC-OPF formulation}

In this section, we compare the proposed VSC-OPF formulation with an alternative formulation proposed in \cite{Kamwa14}. The VSC-OPF employed in \cite{Kamwa14} is based on Voltage Collapse Proximity Index (VCPI), a voltage stability index quantifying power transfer margins of individual branches. The VCPI index for a branch is defined as
\begin{equation}
\mathrm{VCPI} = \frac{P_r}{P_{r,\max}},
\end{equation}
where $P_r$ is the real power transferred to the receiving end, $P_{r,\max}$ is the maximum real power that can be transferred to the receiving end assuming the voltage at the sending end is fixed. It is known from the definition that $0 \le \mathrm{VCPI} \le 1$ and high VCPI signifies a system that is more stressed. Let the sending and receiving end bus voltages be $|V_s|e^{\mathrm{i}\theta_s}$ and $|V_r|e^{\mathrm{i}\theta_r}$, and let $V_d := |V_s|e^{\mathrm{i}\theta_s} - |V_r|e^{\mathrm{i}\theta_r}$, then the index can be represented by the two voltages as
\begin{equation}
\mathrm{VCPI} = \frac{2|V_r||V_d|}{|V_s|^2} + 2\frac{|V_r|\cos(\theta_s - \theta_r)}{|V_s|} - 2 \frac{|V_r|^2}{|V_s|^2} \label{eq:vcpi}
\end{equation}
The resulting VSC-OPF formulation is the same as \eqref{eq:newVSCOPF} except that the constraint \eqref{eq:newVSCOPF:cond} is replaced with $\mathrm{VCPI}_{\max} \le \mathrm{VCPI}_{\mathrm{limit}}$ where $\mathrm{VCPI}_{\max}$ is the maximum VCPI among all branches and $\mathrm{VCPI}_{\mathrm{limit}}$ is a preset threshold. We would like to point out that the VCPI index is heuristic in nature since it has been shown in \cite{Grijalva1} that maximum branch flows are generally encountered well before the onset of voltage instability.

The results of \eqref{eq:newVSCOPF} as well as that of the above formulation based on VCPI depend on the preset threshold. It is difficult to choose comparable thresholds for the two indices that represent similar system stress levels, since after all the effect of the indices in reflecting system stress level is what we want to investigate. Therefore, we propose to compare the two indices by formulating the `voltage stability improvement' problem in \cite{Kamwa14}. That is, instead of using the voltage stability index as a constraint, we directly optimize the sum of stability indices, subject to power flow equations, nodal voltage and power generation constraints \eqref{eq:ACOPF:realpf} -- \eqref{eq:ACOPF:vol}. We then denote the two optimization problems as $(\mathrm{P}_{\mathrm{VCPI}})$ and $(\mathrm{P}_C)$, since they optimize the sum of VCPI and C-index (\cite{Wang17}), respectively.

One thing we notice with $(\mathrm{P}_{\mathrm{VCPI}})$ is that for almost all instances, the problems experience very slow convergence: they do not converge after 1,000 iterations in IPOPT. This is probably due to the poor numerical properties of the VCPI index \eqref{eq:vcpi}, since the gradient and hessian of the constraint involve the reciprocal of $V_d$, which is almost zero when $V_s$ and $V_r$ are close. We end up with the \textsc{Matlab} function \texttt{fmincon} with interior-point solver as was used in \cite{Kamwa14} for $(\mathrm{P}_{\mathrm{VCPI}})$ with the default maximum number of iterations of 3,000. The program terminates with a feasible solution that is best possible, to which we compare with $(\mathrm{P}_C)$ solved with IPOPT. The results are shown in Table \ref{tb:compare}. It is seen that the proposed approach outperforms the one in \cite{Kamwa14}.

\begin{table}[!t]
	\centering
	
	\caption{Comparison of the effect of voltage stability improvement of different VSC-OPF formulations.}
	\begin{tabular}{ |c||c|c||c|c|}
		\hline
		\multirow{2}{*}{Test Case} & \multicolumn{2}{c||}{$\Delta \lambda_{\max}$ (\%)} & \multicolumn{2}{c|}{$\Delta \sigma_{\min}$ (\%)} \\
		\cline{2-5}
		& $(\mathrm{P}_C)$ & $(\mathrm{P}_{\mathrm{VCPI}})$ & $(\mathrm{P}_C)$ & $(\mathrm{P}_{\mathrm{VCPI}})$ \\
		\hline
		\hline
		case24\_ieee\_rts & $9.87$ & $5.21$ &  $0.32$ & $-0.32$ \\
		\hline
		case30 & $15.06$ & $-0.59$ &  $0.01$ & $-5.13$ \\
		\hline
		case\_ieee30 & $9.02$ & $6.02$ &  $4.05$ & $3.38$ \\
		\hline
		case39 & $8.96$ & $1.39$ & $0.51$ & $-4.01$ \\
		\hline
		case57 & $0.52$ & $-1.66$ &  $0.52$ & $-0.76$ \\
		\hline
		case89pegase & $1.39$ & $0.99$ &  $1.27$ & $-5.99$ \\
		\hline
		case118 & $38.43$ & $27.57$ &  $1.58$ & $-4.70$ \\
		\hline
		case300 & $3.33$ & $1.63$ &  $3.43$ & $-2.37$ \\
		\hline
		case1354pegase & $0.92$ & $-4.08$ &  $0.05$ & $-4.69$ \\
		\hline
		case2383 & $2.64$ & ---\textsuperscript{$\dagger$} &  $0.80$ & $-1.63$ \\
		\hline
		\textbf{average} & \bm{$9.01$} & \bm{$4.05$}\textsuperscript{$\ddagger$} & \bm{$1.25$} & \bm{$-2.62$}  \\
		\hline
		\multicolumn{5}{l}{\textsuperscript{$\dagger$}\footnotesize{Power flow based on optimization solution does not converge }} \\
		\multicolumn{5}{l}{\textsuperscript{$\ddagger$}\footnotesize{Average over the first nine cases }} \\
	\end{tabular}
	\label{tb:compare}
\end{table}

\section{Conclusions} \label{sect6}
We have presented a sufficient condition for power flow Jacobian nonsingularity and shown that the condition characterizes a set of voltage stable solutions. A new VSC-OPF model has been proposed based on the sufficient condition. By using the fact that the load powers are constant in an OPF problem, we reformulate the voltage stability condition to a set of second-order conic constraints in a transformed variable space. Furthermore, in the new variable space, we have formulated an SOCP relaxation of the VSC-OPF problem as well as its sparse approximation. Simulation results show that the proposed VSC-OPF and its SOCP relaxation can effectively restrain the stability stress of the system; the optimality gap of the SOCP relaxation is slightly larger than that of the standard OPF problem on IEEE instances due to tightness of the constraints; and the sparse approximation yields significant speed-up on larger instances with small accuracy compromise. It has also been shown that the proposed method outperforms existing one in terms of effectiveness and computational properties.





\end{document}